\nonstopmode \numberwithin{equation}{section}
\newtheorem{thm}{Theorem}[section]
\newtheorem{cor}{Corollary}[section]
\newtheorem{lem}{Lemma}[section]
\theoremstyle{definition}
\newcounter{minutes}\setcounter{minutes}{\time}
\newcounter{hours}\setcounter{hours}{\time}
\newcounter {own}
\def\theown {\thesection       .\arabic{own}}
\newcounter{alphabet}
\begin{document}

\title{Pre-Schwarzian norm estimate for certain Ma-Minda Class of functions}

\author{Md Firoz Ali}
\address{Md Firoz Ali,
 National Institute Of Technology Durgapur,
West Bengal-713209, India}
\email{ali.firoz89@gmail.com, fali.maths@nitdgp.ac.in}

\author{Md Nurezzaman}
\address{Md Nurezzaman, National Institute Of Technology Durgapur,
West Bengal-713209, India}
\email{nurezzaman94@gmail.com}

\author{Sanjit Pal}
\address{Sanjit Pal,
 National Institute Of Technology Durgapur,
West Bengal-713209, India}
\email{palsanjit6@gmail.com}

\subjclass[2020]{Primary 30C45, 30C55}
\keywords{analytic functions, convex functions, starlike functions, Ma-Minda class of starlike functions, pre-Schwarzian norm.}

\def\thefootnote{}
\footnotetext{ {\tiny File:~\jobname.tex,
printed: \number\year-\number\month-\number\day,
          \thehours.\ifnum\theminutes<10{0}\fi\theminutes }
} \makeatletter\def\thefootnote{\@arabic\c@footnote}\makeatother

\begin{abstract}
Let $\mathcal{S}^*(\varphi)$ be the class of all analytic functions $f$ in the unit disk $\mathbb{D}=\{z\in\mathbb{C}:|z|<1\}$, normalized by $f(0)=f'(0)-1=0$ that satisfy the subordination relation $zf'(z)/f(z)\prec\varphi(z)$, where $\varphi$ is an analytic and univalent  in $\mathbb{D}$ with ${\rm Re\,}\varphi(z)>0$ such that $\varphi(\mathbb{D})$ is symmetric with respect to the real axis and stralike with respect to $1$. In the present article, we obtain the sharp estimates of the pre-Schwarzian norm of $f$ and the Alexander transformation $J[f]$ for functions $f(z)$ in the class $\mathcal{S}^*(\varphi)$ when $\varphi(z)=e^{\lambda z}$, $0<\lambda\le\pi/2$ and $\varphi(z)=\sqrt{1+cz}$, $0<c\le1.$
\end{abstract}

\thanks{}

\maketitle
\pagestyle{myheadings}
\markboth{Md Firoz Ali, Md Nurezzaman and Sanjit Pal}{Pre-Schwarzian norm for certain Ma-Minda class}
\section{Introduction}
Let $\mathcal{H}$ be the class of all analytic functions in the open unit disk $\mathbb{D}:=\{z\in\mathbb{C}:|z|<1\}$ and let $\mathcal{A}$ be the subclass of $\mathcal{H}$ with $f(0)=0=f'(0)-1$. Thus any $f\in\mathcal{A}$ has the following Taylor series form
\begin{equation}\label{R-01}
f(z)= z+\sum_{n=2}^{\infty}a_n z^n.
\end{equation}
Further, let $\mathcal{S}$ be the subclass  of $\mathcal{A}$ that are  univalent (that is, one-to-one) in $\mathbb{D}$. A function $f\in\mathcal{A}$ is called starlike if $f(\mathbb{D})$ is a starlike domain with respect to the origin, i.e., for $w_0\in f(\mathbb{D})$, the line segment joining $0$ and $w_0$ is also lies in $f(\mathbb{D})$.
The set of all starlike functions in $\mathcal{S}$ is denoted by $\mathcal{S}^*$. It is well-known that a function $f\in\mathcal{A}$ is in $\mathcal{S}^*$ if and only if ${\rm Re\,} (zf'(z)/f(z))>0$ for $z\in\mathbb{D}$. Analogously, a function $f\in\mathcal{A}$ is called convex if $f(\mathbb{D})$ is a convex domain, i.e., $f(\mathbb{D})$ is a starlike domain with respect to each of its points.
The set of all convex functions in $\mathcal{S}$ is denoted by $\mathcal{C}$. It is well-known that a function $f\in\mathcal{A}$ is in $\mathcal{C}$ if and only if ${\rm Re\,} \left(1+zf''(z)/f'(z)\right)>0$ for $z\in\mathbb{D}$. For more details about these classes, we refer to \cite{1983-Duren, 1983-Goodman}.\\

For two functions $f$ and $g$ in $\mathcal{H}$, we say that $f$ is subordinate to $g$, written as $f\prec g$ if there exists a function $\omega\in\mathcal{H}$ with $\omega(0)=0$ and $|\omega(z)|<1$ such that $f(z)=g(\omega(z))$ for $z\in\mathbb{D}$. Let $\varphi$ be an analytic univalent function with positive real part in $\mathbb{D}$ such that $\varphi(\mathbb{D})$ is symmetric with respect to the real axis and starlike with respect to $\varphi(0) = 1$ and $\varphi'(0)>0$.
For such $\varphi$, Ma and Minda \cite{1992-Ma-Minda} introduced the classes $\mathcal{S}^*(\varphi)$ and $\mathcal{C}(\varphi)$ as
\begin{align*}
\mathcal{S}^*(\varphi)=\left\{f\in\mathcal{A}:\frac{zf'(z)}{f(z)}\prec\varphi(z)\right\},
\end{align*}
and
\begin{align*}
\mathcal{C}(\varphi)=\left\{f\in\mathcal{A}:1+\frac{zf''(z)}{f'(z)}\prec\varphi(z)\right\},
\end{align*}
respectively. Sometimes $\mathcal{S}^*(\varphi)$ and $\mathcal{C}(\varphi)$ are called Ma-Minda classes of starlike and convex functions, respectively. One can easily prove the inclusion relations $\mathcal{S}^*(\varphi)\subset\mathcal{S}^*$ and $\mathcal{C}(\varphi)\subset\mathcal{C}$. It is important to note that $f\in\mathcal{S}^*(\varphi)$ if and only if $J[f]\in\mathcal{C}(\varphi)$, where $J[f]$ is the Alexander transformation of $f$ defined by 
$$
J[f](z)=\int_0^z\frac{f(t)}{t}dt=f(z)*(-\log(1-z)).
$$
The symbol $*$ stands for the usual Hadamard product (or convolution).\\

For different choices of the function $\varphi$, we get many well known geometric subclasses of $\mathcal{A}$. For example, if we take $\varphi(z)=(1+z)/(1-z)$ then the classes $\mathcal{S}^*(\varphi)$ and $\mathcal{C}(\varphi)$ reduces to the classes $\mathcal{S}^*$ of starlike functions and $\mathcal{C}$ of convex functions. For $\varphi(z)=(1+(1-2\alpha)z)/(1-z),~0\le \alpha<1$, one may get the classes $\mathcal{S}^*(\alpha)$ of starlike function of order $\alpha$ and $\mathcal{C}(\alpha)$ of convex function of order $\alpha$. For $\varphi=((1+z)/(1-z))^\gamma,~0< \gamma\le1$, Stankiewicz \cite{1971-Stankiewicz} introduced the classes $\mathcal{S}\mathcal{S}^*(\gamma)=\mathcal{S}^*(\varphi)$ and $\mathcal{S}\mathcal{C}(\gamma)=\mathcal{C}(\varphi)$ which are known as the class of strongly starlike function of order $\gamma$ and   strongly convex function of order $\gamma$. Also for  $\varphi=(1+Az)/(1+Bz),~-1\le B<A\le 1$, we have the classes of Janowski starlike and convex functions $\mathcal{S}^*(A,B)$ and $\mathcal{C}(A,B)$, respectively (see \cite{1973-Janowski}).
Further, the class $\mathcal{S}^*(\varphi)$ with $\varphi(z)=(1+2/\pi^2(\log(1-\sqrt{z})/(1+\sqrt{z}))^2)$ was introduced and studied by R\o nning \cite{1993-Ronning}. Raina and Sok\'{o}\l~\cite{2015-Raina} introduced and studied the class $\mathcal{S}^*(\varphi)$ with $\varphi(z)=z+\sqrt{1+z^2}$. Kumar et al. \cite{2019-Kumar} introduced the family $\mathcal{S}^*(\varphi)$ with $\varphi(z)=1+\sin z$ and also obtained radius of starlikeness and radius of convexity. The class $\mathcal{S}^*(\varphi)$ with $\varphi(z)=2/(1+e^{-z})$ was introduced by Goel and Kumar \cite{2020-Goel} and studied radius problems, coefficient bounds, growth and distortion theorem and inclusion relations.\\

In the present article, we concentrate on two different classes of functions, namely, $\mathcal{S}^*_{\lambda e}=\mathcal{S}^*(\varphi)$ with $\varphi=e^{\lambda z},~0<\lambda\le \pi/2$ and $\mathcal{S}^*(q_c)=\mathcal{S}^*(\varphi)$ with $\varphi=\sqrt{1+cz},~0<c\le 1$. More precisely,
\begin{align}\label{R-05}
\mathcal{S}^*_{\lambda e}=\left\{f\in\mathcal{A}
:\frac{zf'(z)}{f(z)}\prec e^{\lambda z}\right\}
\end{align}
and
\begin{align}\label{R-10}
\mathcal{S}^*(q_c)=\left\{f\in\mathcal{A}
:\frac{zf'(z)}{f(z)}\prec \sqrt{1+cz}\right\}.
\end{align}

For $\lambda=1$, the class $\mathcal{S}^*_{\lambda e}=\mathcal{S}^*_{e}=\mathcal{S}^*(e^z)$ was introduced and studied by Mendiratta et al. \cite{2014-Mendiratta}. Later the family $\mathcal{S}^*_{\lambda e}$ was introduced and studied by Wang et al. \cite{2020-L.Shi}.  To see more results on this class, we refer to
\cite{2018-Kumar-Ravichandran,2017-KUMAR-RAVICHANDRAN-VERMA}. Likewise, for $c=1$, the class $\mathcal{S}^*(q_c)$ reduces to the class $\mathcal{S}^*(\sqrt{1+z})$ which was introduced and studied by Sok\'{o}\l~ and Stankiewicz \cite{1996-Sokol}. The family $\mathcal{S}^*(q_c)$ was introduced and studied by Aouf et al. \cite{2011-Aouf-Dziok-Sokol}. It is important to note that $f\in\mathcal{S}^*_{\lambda e}(\text{respectively},~\mathcal{S}^*(q_c))$ if and only if $J[f]\in \mathcal{C}_{\lambda e}\left(\text{respectively},~\mathcal{C}(q_c)\right)$, where  the classes $\mathcal{C}_{\lambda e}$ and $\mathcal{C}(q_c)$ are defined by
\begin{align}\label{R-12}
\mathcal{C}_{\lambda e}=\left\{f\in\mathcal{A}:1+\frac{zf''(z)}{f'(z)}\prec e^{\lambda z}\right\}
\end{align}
and
\begin{align}\label{R-14}
\mathcal{C}(q_c)=\left\{f\in\mathcal{A}:1+\frac{zf''(z)}{f'(z)}\prec \sqrt{1+cz}\right\}.
\end{align}
For $\lambda=1$, the class $\mathcal{C}_{\lambda e}=\mathcal{C}_{e}=\mathcal{C}(e^z)$ was introduced and studied by Mendiratta et al. \cite{2014-Mendiratta}. Later the class $\mathcal{C}_{\lambda e}$ was introduced and studied by Wang et al. \cite{2020-L.Shi}. \\

 It is easy to see that  for function $f$ in $\mathcal{S}^*_{\lambda e}$ satisfy the condition
 $$
 \left|\log\frac{zf'(z)}{f(z)}\right|\le \lambda,\quad z\in\mathbb{D},
 $$
 whereas for any $f$ in $\mathcal{S}^*(q_c)$ one can easily verify that
$$
\left|\left(\frac{zf'(z)}{f(z)}\right)^2-1\right|\le c, \quad z\in\mathbb{D}.
$$

It is clear that a function $f\in\mathcal{S}^*_{\lambda e}$ if and only if there exists a $p_1\in\mathcal{H}$ with $p_1\prec e^{\lambda z},~0<\lambda\le \pi/2$  such that
\begin{align}\label{R-15}
f(z)=z~\mathrm{exp}{\int_0^z\frac{p_1(t)-1}{t}dt} .
\end{align}
Similarly, a function $f\in\mathcal{S}^*(q_c)$ if and only if there exists a $p_2\in\mathcal{H}$ with $p_2\prec \sqrt{1+cz},~0<c\le1$ such that
\begin{align}\label{R-20}
f(z)=z~\mathrm{exp}{\int_0^z\frac{p_2(t)-1}{t}dt}.
\end{align}
Particularly, if we choose $p_1(z)=e^{\lambda z}$ and $p_2(z)=\sqrt{1+cz}$ in \eqref{R-15} and \eqref{R-20}, respectively, we obtain
\begin{align*}
f_1(z)=z~\mathrm{exp}{\int_0^z\frac{e^{\lambda t}-1}{t}dt}\in \mathcal{S}^*_{\lambda e},
\end{align*}
and
\begin{align*}
f_2(z)=z~\mathrm{exp}{\int_0^z\frac{\sqrt{1+ct}-1}{t}dt}\in \mathcal{S}^*(q_c).
\end{align*}
The functions $f_1(z)$ and $f_2(z)$ plays the role of extremal function for many extremal problems in the classes $\mathcal{S}^*_{\lambda e}$ and $\mathcal{S}^*(q_c)$, respectively.\\

\section{Pre-Schwarzian Norm}

Let $\mathcal{LU}$ denote the subclass of $\mathcal{H}$ consisting of all locally univalent functions in $\mathbb{D}$, i.e., $\mathcal{LU}:=\{f\in\mathcal{H}:f'(z)\ne 0\text{ for all }z\in\mathbb{D}\}$. For a locally univalent function $f\in\mathcal{LU}$, the pre-Schwarzian derivative is defined by
$$
P_f(z):=\frac{f''(z)}{f'(z)},
$$
and the pre-Schwarzian norm (the hyperbolic sup-norm) is defined by
$$
||P_f||:=\sup\limits_{z\in\mathbb{D}}(1-|z|^2)|P_f(z)|.
$$
This norm has a significant meaning in the theory of Teichm\"{u}ller spaces. For a univalent function $f$, it is well known that $||P_f||\leq 6$. On the other hand, if $||P_f||\leq 1$ then the function $f$ is univalent in $\mathbb{D}$. Both the constants $6$ and $1$ are best possible (see \cite{1972-Becker, 1984-Becker-Pommerenke-1984}). In 1976, Yamashita \cite{1976-Yamashita} proved that $||P_f ||$ is finite if and only if $f$ is uniformly locally univalent in $\mathbb{D}$. Moreover, if $||P_f||<2$, then $f$ is bounded in $\mathbb{D}$ (see \cite{2002-Kim-Sugawa}).\\

In univalent function theory, several researchers determined the pre-Schwarzian norm for different subclasses of analytic and univalent functions.
In 1998, Sugawa \cite{1998-Sugawa} obtained sharp estimate of the pre-Schwarzian  norm for functions in the class $S^*(\varphi)$ with $\varphi=\left((1+z)/(1-z)\right)^\alpha$ of strongly starlike functions of order $\alpha$, $0<\alpha\le 1$.
 In $1999$, Yamashita \cite{1999-Yamashita} studied the classes $\mathcal{S}^*(\alpha)$ and $\mathcal{C}(\alpha)$, $0\le \alpha<1$, and proved the sharp estimates $||P_f||\le6-4\alpha$ for $f\in\mathcal{S}^*(\alpha)$ and $||P_f||\le 4(1-\alpha)$ for $f\in\mathcal{C}(\alpha)$. A function $f\in\mathcal{A}$ is said to be $\alpha$-spirallike function if ${\rm Re}(e^{-i\alpha}zf'(z)/f(z))>0$ for $z\in\mathbb{D}$, where $-\pi/2<\alpha<\pi/2$. In $2000$, Okuyama \cite{2000-Okuyama} obtained the sharp estimate of the pre-Schwarzian norm for $\alpha$-spirallike functions. Kim and Sugawa \cite{2006-Kim-Sugawa} studied the class $\mathcal{C}(A,B)$ and obtained the sharp estimate of the pre-Schwarzian norm $||P_f||\le2(A-B)/(1+\sqrt{1-B^2})$ (see also \cite{2008-Ponnusamy-Sahoo}).  Ponnuswamy and Sahoo \cite{2010-Ponnusamy} obtained the sharp estimates of the pre-Schwarzian norm for functions in the class
$
\mathcal{S}^*[\alpha,\beta]=S^*(\varphi),~\varphi(z)= \left((1+(1-2\beta)z)/(1-z)\right)^\alpha,
$
where $0<\alpha\le1$ and $0\le\beta<1$. In $2014$, Aghalary and Orouji \cite{2014-Aghalary} obtained the sharp estimate of the pre-Schwarzian norm for $\alpha$-spirallike function of order $\rho$, where $-\pi/2<\alpha<\pi/2$ and $0\le\rho<1$. \\

The pre-Schwarzian norm of certain integral transform of $f$ (e.g. Alexander transform $J[f]$) for certain subclass of $f$ has been also studied in the literature. One of the general integral transformation is the Bernardi integral transformation, which plays a significant role to connect with other integral transformations in geometric function theory.  For any $\gamma>-1$ and $f\in\mathcal{A}$, the Bernardi integral transformation $B_\gamma[f]$ is defined by 
$$
B_\gamma[f](z)=\frac{\gamma+1}{z^\gamma}\int_0^zt^{\gamma-1}f(t)dt=f(z)*zF(1,\gamma+1;\gamma+2;z),
$$
where $F(a,b;c;z)$ is the well-known Gaussian hypergeometric function. It is clear that $B_0[f]=J[f]$. Here the univalence of $f$ does not necessarily implies the univalence of $J[f]$ (see \cite{1983-Duren}). In $2004$, Kim et al. \cite{2004-Kim-Ponnusamy-Sugawa} obtained the sharp estimate $||P_{J[f]}||\le4$ of the Alexander transformation $J[f]$ for functions $f$ in $\mathcal{S}$. Parvatham et al. \cite{2008-Parvatham-Ponnusamy-Sahoo} obtained the sharp estimate of the pre-Schwarzian norm of the Bernardi integral transform $B_\gamma[f], \gamma>-1$ for functions
in $C(A, B)$. Ponnusamy and Sahoo \cite{2008-Ponnusamy-Sahoo} obtained sharp estimate of the pre-Schwarzian norm $\|P_{J[f]}\|$ for functions in the class $\mathcal{C}(A,B)$ for $-1\le B<A\le1$, under certain restrictions on $A$ and $B$ with the help of hypergeometric functions. It is important to note that the Alexander transformation $J[f]\in \mathcal{S}^*(A,B)$ if $f\in\mathcal{C}(A,B)$. In 2023, Ali and Pal \cite{2023-Ali} determined the sharp estimate of the pre-Schwarzian norm for functions in $\mathcal{S}^*(A,B)$ for $-1\le B<A\le1$.\\

Motivated by these above work, in the present article, we find the sharp estimates of the pre-Schwarzian norms for functions in the classes $\mathcal{S}^*_{\lambda e},~\mathcal{C}_{\lambda e}$ with $0<\lambda\le\pi/2$ and $\mathcal{S}^*(q_c),~\mathcal{C}(q_c)$ with $0<c\le1$, respectively.

\begin{thm}\label{R-120}
For $0<\lambda\le\pi/2$, let $f\in\mathcal{S}^*_{\lambda e}$. Then the pre-Schwarzian norm satisfies the following sharp inequality
$$
||P_f||\le \frac{(1-\alpha^2)(e^{\lambda \alpha}+\lambda \alpha-1)}{\alpha},
$$
where $\alpha$ is the unique root in  $(0,1)$ of the equation
\begin{equation}\label{R-35}
1+r^2-2\lambda r^3-e^{\lambda r}(1-\lambda r+r^2+\lambda r^3)=0.
\end{equation}
\end{thm}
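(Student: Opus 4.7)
The plan is to translate the subordination $zf'(z)/f(z)\prec e^{\lambda z}$ into a pointwise identity through a Schwarz function, apply sharp pointwise estimates, and reduce the problem to a one-variable optimisation. Since $f\in\mathcal{S}^*_{\lambda e}$, there is a Schwarz function $w\in\mathcal{H}$ with $w(0)=0$ and $|w(z)|<1$ such that $zf'(z)/f(z)=e^{\lambda w(z)}$. Taking the logarithmic derivative and using $f'(z)/f(z)=e^{\lambda w(z)}/z$ yields the identity
$$
P_f(z)=\frac{f''(z)}{f'(z)}=\lambda w'(z)+\frac{e^{\lambda w(z)}-1}{z}.
$$
I then invoke the Schwarz--Pick inequality $|w'(z)|\le (1-|w(z)|^2)/(1-|z|^2)$ together with the termwise bound $|e^{\lambda w(z)}-1|\le e^{\lambda|w(z)|}-1$. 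Writing $s=|z|$ and $t=|w(z)|$, this gives
$$
(1-s^2)|P_f(z)|\le \lambda(1-t^2)+\frac{(1-s^2)(e^{\lambda t}-1)}{s}=:H(s,t).
$$

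The Schwarz lemma forces $0\le t\le s<1$. A direct computation shows $\partial_s H(s,t)=-(1+s^2)(e^{\lambda t}-1)/s^2\le 0$, so for fixed $t$ the supremum is attained on the diagonal $s=t$. Hence the problem reduces to maximizing the one-variable function
$$
G(t):=\frac{(1-t^2)(\lambda t+e^{\lambda t}-1)}{t},\qquad t\in(0,1).
$$
Differentiating and clearing denominators, $t^{2}G'(t)$ simplifies precisely to the expression $1+t^2-2\lambda t^3-e^{\lambda t}(1-\lambda t+t^2+\lambda t^3)$ on the left-hand side of \eqref{R-35}. Calling this $\phi(t)$, the Taylor expansion at the origin gives $\phi(t)=\tfrac{1}{2}\lambda^{2}t^{2}+O(t^{3})>0$ near $0$, whereas $\phi(1)=2-2\lambda-2e^{\lambda}<0$ for every $\lambda>0$. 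By the intermediate value theorem a root $\alpha\in(0,1)$ exists, and $G(\alpha)=(1-\alpha^2)(e^{\lambda\alpha}+\lambda\alpha-1)/\alpha$ equals the claimed bound.

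Sharpness is established using the extremal function $f_{1}(z)=z\exp\!\int_{0}^{z}(e^{\lambda \tau}-1)/\tau\,d\tau$, which corresponds to the identity Schwarz function $w(z)=z$. For this choice Schwarz--Pick is an equality, and evaluating at $z=\alpha\in(0,1)$ makes $\lambda$ and $(e^{\lambda\alpha}-1)/\alpha$ simultaneously positive real, so the triangle inequality applied in Step 1 is also attained; therefore $(1-\alpha^{2})|P_{f_{1}}(\alpha)|=G(\alpha)$. The main obstacle is to verify that $\alpha$ is the \emph{unique} root of \eqref{R-35} in $(0,1)$ and that $G$ actually attains its global maximum there (note $G(0^{+})=2\lambda>0$ and $G(1^{-})=0$, so a priori $\phi$ could oscillate). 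This will require a monotonicity analysis of $\phi$, most naturally by factoring $\phi'(t)=t\,\psi(t)$ with $\psi(t)=2-6\lambda t-e^{\lambda t}\bigl((2-\lambda^{2})+4\lambda t+\lambda^{2}t^{2}\bigr)$ and showing that $\psi$ has a unique sign change on $(0,1)$ uniformly for $0<\lambda\le\pi/2$, which is the delicate step.
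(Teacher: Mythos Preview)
Your argument coincides with the paper's proof through the reduction to the one-variable function $G(t)=h(t)=(1-t^{2})(e^{\lambda t}+\lambda t-1)/t$: same Schwarz function representation, same Schwarz--Pick bound, same observation that $\partial_r H\le 0$ forces the supremum onto the diagonal. The only piece you leave open is the uniqueness of the root $\alpha$ and the fact that it is the global maximiser of $G$; you are right that this is the remaining issue, but it is not as delicate as you suggest.

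Your proposed route via $\phi'(t)=t\psi(t)$ with $\psi(t)=2-6\lambda t-e^{\lambda t}\bigl((2-\lambda^{2})+4\lambda t+\lambda^{2}t^{2}\bigr)$ finishes in one more derivative:
\[
\psi'(t)=-\lambda\Bigl[6+e^{\lambda t}\bigl(6+6\lambda t-(1-t^{2})\lambda^{2}\bigr)\Bigr],
\]
and since $0<\lambda\le\pi/2$ gives $(1-t^{2})\lambda^{2}\le\lambda^{2}<6$, the bracket is positive and $\psi'(t)<0$ on $(0,1)$. Thus $\psi$ is strictly decreasing with $\psi(0)=\lambda^{2}>0$ and $\psi(1)<0$, so $\phi'$ changes sign exactly once (from $+$ to $-$); combined with $\phi(0)=0$, $\phi(0^{+})>0$, $\phi(1)<0$, this gives a unique root $\alpha$ at which $G$ attains its maximum. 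The paper performs the equivalent computation by showing $G''(t)=k(t)/t^{3}<0$ directly (strict concavity of $G$): it writes $k(t)=-2(1+\lambda t^{3})+e^{\lambda t}(2-2\lambda t+\lambda^{2}t^{2}-2\lambda t^{3}-\lambda^{2}t^{4})$ and checks $k'(t)=t^{2}\psi'(t)<0$ with $k(0)=0$. The two routes are literally the same calculation up to the factor $t^{2}$, so once you compute $\psi'$ your proof is complete and matches the paper's.
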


For the particular value $\lambda=1$, we get the sharp estimate of the pre-Schwarzian norm for functions in $\mathcal{S}^*(e^z)$.

\begin{cor}\label{R-125}
For any $f\in\mathcal{S}^*(e^z)$, the pre-Schwarzian norm satisfies the following sharp inequality
$$
||P_f||\le \frac{(1-\alpha^2)(e^{\alpha}+\alpha-1)}{\alpha},
$$
where $\alpha$ is the unique root in  $(0,1)$ of the equation
$$1+r^2-2r^3-e^{ r}(1-r+r^2+r^3)=0.$$
\end{cor}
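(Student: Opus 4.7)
The plan is to combine the Ma--Minda subordination representation with the Schwarz--Pick inequality, reducing the problem to a one-variable maximization along the ``diagonal.'' Since $f\in\mathcal{S}^*_{\lambda e}$, there exists a Schwarz function $\omega$ with $\omega(0)=0$ and $|\omega(z)|\le|z|$ such that $h(z):=zf'(z)/f(z)=e^{\lambda\omega(z)}$. A logarithmic differentiation of $h$ gives the identity
\[
\frac{f''(z)}{f'(z)} \;=\; \frac{h(z)-1}{z}+\frac{h'(z)}{h(z)} \;=\; \frac{e^{\lambda\omega(z)}-1}{z}+\lambda\,\omega'(z),
\]
which reduces the estimate for $P_f$ to two explicit quantities in $\omega$ and $\omega'$.

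Next I would apply the triangle inequality together with (i) the coefficient bound $|e^{\lambda w}-1|\le e^{\lambda|w|}-1$ (which holds because $e^{\lambda w}-1$ has non-negative Taylor coefficients in $w$) and (ii) the Schwarz--Pick inequality $(1-|z|^2)|\omega'(z)|\le 1-|\omega(z)|^2$. Writing $r=|z|$ and $s=|\omega(z)|\le r$, these give
\[
(1-|z|^2)\bigl|P_f(z)\bigr| \;\le\; G(r,s)\;:=\;\frac{(1-r^2)(e^{\lambda s}-1)}{r}+\lambda(1-s^2).
\]
Since $\partial G/\partial r = -(1+r^2)(e^{\lambda s}-1)/r^2\le 0$, $G$ is non-increasing in $r$ for each fixed $s\ge 0$, so $G(r,s)\le G(s,s)=\phi(s)$ where $\phi(s):=(1-s^2)(e^{\lambda s}+\lambda s-1)/s$. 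Thus $\|P_f\|\le\sup_{0<s<1}\phi(s)$.

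To identify this supremum I would differentiate $\phi$: a direct calculation gives $r^2\phi'(r)=\lambda r(1-r^2)(e^{\lambda r}+1)-(1+r^2)(e^{\lambda r}+\lambda r-1)$, and setting the right-hand side to zero rearranges exactly to \eqref{R-35}. A Taylor expansion at the origin yields $\phi(0^+)=2\lambda$ and $\phi'(0^+)=\lambda^2/2>0$, while $\phi(1)=0$ and $\phi'(1^-)=-2(e^\lambda+\lambda-1)<0$, so $\phi$ attains an interior maximum in $(0,1)$. The main technical obstacle will be proving uniqueness of the critical point: writing $\psi(r)$ for the left-hand side of \eqref{R-35}, one has $\psi(0)=0$, $\psi(r)\sim(\lambda^2/2)r^2>0$ for small $r>0$, and $\psi(1)=2-2\lambda-2e^{\lambda}<0$ throughout $0<\lambda\le\pi/2$, so existence of $\alpha\in(0,1)$ follows from the intermediate value theorem; uniqueness should follow from a careful sign analysis of $\psi'$ showing that $\psi$ admits exactly one sign change on $(0,1)$, which I expect to be the most delicate step and may require a concavity argument or case-splitting on the size of $\lambda r$.

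Finally, sharpness is realized by the extremal function $f_1(z)=z\exp\int_0^z(e^{\lambda t}-1)/t\,dt\in\mathcal{S}^*_{\lambda e}$, corresponding to $\omega(z)=z$. Evaluating at $z=\alpha$ real simultaneously saturates the coefficient bound (at $w=\lambda\alpha$) and the Schwarz--Pick inequality (since $|\omega'(\alpha)|=1=(1-\alpha^2)/(1-\alpha^2)$), yielding $(1-\alpha^2)|P_{f_1}(\alpha)|=\phi(\alpha)$ and confirming that the estimate is sharp.
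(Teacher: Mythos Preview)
Your approach is essentially identical to the paper's: the same logarithmic-derivative identity, the same reduction via Schwarz--Pick to the two-variable function $G(r,s)$, the same monotonicity in $r$ collapsing to the diagonal $\phi(s)=(1-s^2)(e^{\lambda s}+\lambda s-1)/s$, and the same extremal function for sharpness.

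The one step you leave as a sketch---uniqueness of the critical point---is handled in the paper exactly by the concavity argument you anticipate, and it turns out not to be delicate. Writing $h''(s)=k(s)/s^3$ with
\[
k(s)=-2(1+\lambda s^3)+e^{\lambda s}\bigl(2-2\lambda s+\lambda^2 s^2-2\lambda s^3-\lambda^2 s^4\bigr),
\]
one computes
\[
k'(s)=-\lambda s^2\Bigl(6+e^{\lambda s}\bigl(6+6\lambda s-(1-s^2)\lambda^2\bigr)\Bigr),
\]
and since $\lambda^2\le\pi^2/4<6$ the bracket is positive, so $k'(s)<0$. Because $k(0)=0$, this gives $k(s)<0$ on $(0,1)$, hence $\phi''<0$, hence $\phi'$ is strictly decreasing and has a unique zero. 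No case-splitting is needed. (Incidentally, your value $\phi'(0^+)=\lambda^2/2$ is correct; the paper records $\lambda^2$, but only the sign matters.)
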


The next theorem, gives the sharp estimate of the pre-Schwarzian norm for functions in the class $\mathcal{S}^*(q_c)$.

\begin{thm}\label{R-130}
For $0<c\le 1$, let $f\in\mathcal{S}^*(q_c)$. Then the pre-Schwarzian norm satisfies the following sharp inequality
$$
||P_f||\le \frac{c(1-\alpha^2)}{2(1-c\alpha)}+\frac{(1-\alpha^2)(1-\sqrt{1-c\alpha})}{\alpha},
$$
where $\alpha$ is the unique root in $(0,1)$ of the equation
\begin{equation}\label{R-80}
-2+4cr-(2+c^2)r^2+2cr^3-c^2r^4+(1-cr)^{3/2}(2-cr+2r^2-3cr^3)=0.
\end{equation}
\end{thm}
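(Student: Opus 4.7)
The plan is to use the subordination to express $f\in\mathcal{S}^*(q_c)$ through a Schwarz function $\omega$ (analytic with $\omega(0)=0$ and $|\omega(z)|<1$) satisfying $zf'(z)/f(z)=\sqrt{1+c\omega(z)}$. Logarithmic differentiation and a rearrangement yield the representation
\[
P_f(z)=\frac{c\omega'(z)}{2(1+c\omega(z))}+\frac{\sqrt{1+c\omega(z)}-1}{z}.
\]
Writing $r=|z|$ and $\rho=|\omega(z)|$, the Schwarz lemma gives $\rho\le r$.

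Combining the triangle inequality with (i) the Schwarz--Pick estimate $(1-r^2)|\omega'(z)|\le 1-\rho^2$, (ii) the elementary bound $|1+c\omega(z)|\ge 1-c\rho$, and (iii) the sharp estimate $|\sqrt{1+c\omega(z)}-1|\le 1-\sqrt{1-c\rho}$ produces
\[
(1-r^2)|P_f(z)|\le \Phi(r,\rho):=\frac{c(1-\rho^2)}{2(1-c\rho)}+\frac{(1-r^2)(1-\sqrt{1-c\rho})}{r}.
\]
I would prove (iii) by writing $\sqrt{w}-1=(w-1)/(\sqrt{w}+1)$ with $w=1+c\omega(z)$, noting $|w-1|\le c\rho$, and checking that on the circle $w=1+c\rho\,e^{i\theta}$ the quantity $|\sqrt{w}+1|^2$ is minimized at $\theta=\pi$, where $w=1-c\rho$.

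The decisive reduction to a single variable is the observation that $\Phi(r,\rho)$ is non-increasing in $r$ for each fixed $\rho$: the first term does not depend on $r$, while the second equals $(1/r-r)(1-\sqrt{1-c\rho})$, which is decreasing in $r$. Since $\rho\le r$, this yields
\[
\sup_{f,z}(1-|z|^2)|P_f(z)|\le \sup_{\rho\in(0,1)}G(\rho),\qquad G(\rho):=\Phi(\rho,\rho)=\frac{c(1-\rho^2)}{2(1-c\rho)}+\frac{c(1-\rho^2)}{1+\sqrt{1-c\rho}},
\]
where the second form of the second summand comes from rationalization. Since $G(0)=c>0$ and $G(\rho)\to 0$ as $\rho\to 1^-$, a maximum is attained at an interior critical point $\alpha\in(0,1)$.

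The final step is to compute $G'(\rho)=0$ and, after multiplying by $2(1-c\rho)^2(1+\sqrt{1-c\rho})^2\sqrt{1-c\rho}$ to clear fractional denominators, arrive at equation \eqref{R-80}; this algebraic reduction is the main computational obstacle, with $r=0$ appearing as a spurious root to be discarded. Uniqueness of the genuine root $\alpha\in(0,1)$ would follow from a sign/monotonicity analysis of the resulting expression together with $G'(0)>0$ and $G(1^-)=0$. Sharpness is transparent on $f_2(z)=z\exp\int_0^z(\sqrt{1+ct}-1)/t\,dt$: since $\omega(z)=z$ there, the Schwarz--Pick inequality is saturated, and at $z=-\alpha$ both summands of $P_{f_2}(-\alpha)$ are positive reals, so every estimate above holds with equality and $(1-\alpha^2)|P_{f_2}(-\alpha)|=G(\alpha)$ equals the asserted bound.
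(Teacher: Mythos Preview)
Your overall strategy coincides with the paper's: represent $f$ via a Schwarz function, differentiate logarithmically, bound with the triangle inequality and Schwarz--Pick, and reduce to maximizing the single-variable function $G(\rho)=\Phi(\rho,\rho)$. Your reduction via ``$\Phi$ is non-increasing in $r$, and $\rho\le r$, hence $\Phi(r,\rho)\le\Phi(\rho,\rho)$'' is in fact cleaner than the paper, which splits into the cases $c=1$ and $c\ne1$ and uses monotonicity in $s$ for the first and in $r$ for the second; both routes land on the same one-variable function. Your justification of the estimate $|\sqrt{1+c\omega}-1|\le 1-\sqrt{1-c\rho}$ is also more explicit than the paper's.

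There are, however, two defects. First, the boundary claim $G(1^-)=0$ is wrong when $c=1$: there the first summand equals $(1+\rho)/2\to 1$, so $G(1^-)=1$. Your argument for an interior maximum therefore does not go through as written; you need instead $G'(0^+)=5c^2/8>0$ and $G'(1^-)<0$ (for $c=1$ one gets $G'(1^-)=-3/2$), which you only mention later.

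Second, and more seriously, the uniqueness of the critical point is the crux, and you have not supplied it. Saying it ``would follow from a sign/monotonicity analysis'' together with the endpoint signs of $G'$ only gives existence of \emph{some} root; nothing yet prevents several. The paper handles this by proving that $G$ is strictly concave on $(0,1)$, i.e.\ $G''<0$, and this requires a separate technical lemma (their Lemma~3.1): one must show that
\[
(1-cs)^2\bigl(-8+12cs-3c^2s^2-4cs^3+3c^2s^4\bigr)-4\sqrt{1-cs}\bigl(-2+6cs-6c^2s^2+cs^3+c^3s^3\bigr)<0
\]
for all $s\in(0,1)$ and $c\in(0,1]$, which is done by a further layer of differentiation and sign-chasing. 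Without this concavity, neither the uniqueness of $\alpha$ nor the identification of $G(\alpha)$ as the global supremum is established, and the theorem statement (``the unique root'') is not yet justified. The algebraic passage from $G'(\rho)=0$ to equation~\eqref{R-80}, which you flag as the main computational obstacle, is by comparison routine.
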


For the particular value $c=1$, the class $\mathcal{S}^*(q_c)$ reduce to the class $\mathcal{S}^*(q_1)=:\mathcal{S}^*(\sqrt{1+z})$. The sharp estimate of the pre-Schwarzian norm for functions in $\mathcal{S}^*(\sqrt{1+z})$ is given by the following result.

\begin{cor}\label{R-135}
Let $f\in\mathcal{S}^*(\sqrt{1+z})$ be of the form \eqref{R-01}. Then the pre-Schwarzian norm satisfies the following sharp inequality
$$
||P_f||\le \frac{(1+\alpha)(2-\alpha-2(1-\alpha)^{3/2})}{2\alpha},
$$
where $\alpha$ is the unique root in $(0,1)$ of the equation
$$-2-r^2+(2+r+3r^2)\sqrt{1-r}=0.$$
\end{cor}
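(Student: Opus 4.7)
The plan is to derive this corollary as the $c=1$ specialization of Theorem~\ref{R-130}, which is stated earlier in the excerpt. Setting $c=1$ in the bound of Theorem~\ref{R-130} gives
$$
\|P_f\|\le \frac{1-\alpha^2}{2(1-\alpha)}+\frac{(1-\alpha^2)(1-\sqrt{1-\alpha})}{\alpha},
$$
and the first term collapses to $(1+\alpha)/2$ once the factor $1-\alpha$ is cancelled. Putting both terms over the common denominator $2\alpha$ and using $(1-\alpha)\sqrt{1-\alpha}=(1-\alpha)^{3/2}$, I would simplify the numerator to $(1+\alpha)\bigl[\alpha+2(1-\alpha)-2(1-\alpha)^{3/2}\bigr]=(1+\alpha)\bigl[2-\alpha-2(1-\alpha)^{3/2}\bigr]$, which is exactly the claimed bound.

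The second and slightly more interesting step is to identify the defining equation for $\alpha$. Specializing \eqref{R-80} to $c=1$ gives
$$
-2+4r-3r^{2}+2r^{3}-r^{4}+(1-r)^{3/2}(2-r+2r^{2}-3r^{3})=0.
$$
The plan is to factor a common $(1-r)^{2}$ out of this expression. A direct polynomial division shows that
$$
-2+4r-3r^{2}+2r^{3}-r^{4}=-(1-r)^{2}(r^{2}+2),\qquad 2-r+2r^{2}-3r^{3}=(1-r)(3r^{2}+r+2).
$$
Substituting and dividing through by $(1-r)^{2}$ (harmless since we look for $\alpha\in(0,1)$) reduces the equation to $-(r^{2}+2)+(2+r+3r^{2})\sqrt{1-r}=0$, which is the equation appearing in the corollary.

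The final step is to confirm that the reduced equation still has a unique root in $(0,1)$, so that the ``$\alpha$'' of the corollary coincides with the ``$\alpha$'' of Theorem~\ref{R-130} at $c=1$. This follows once one checks that the function $g(r):=-2-r^{2}+(2+r+3r^{2})\sqrt{1-r}$ satisfies $g(0)=0+2=0$… wait, $g(0)=-2+2=0$; so one instead examines $g$ on $(0,1)$: $g(1)=-3<0$ and $g'(0)>0$, giving one sign change and a unique root in the open interval (alternatively, this uniqueness is inherited from the proof of Theorem~\ref{R-130} since the $(1-r)^{2}$ we cancelled is strictly positive on $(0,1)$).

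The only mildly delicate point is the algebraic simplification of \eqref{R-80} at $c=1$; once the factorizations of the polynomial and of $2-r+2r^{2}-3r^{3}$ through $(1-r)$ are spotted, everything else is bookkeeping and no new analytical input beyond Theorem~\ref{R-130} is required.
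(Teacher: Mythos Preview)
Your proposal is correct and matches the paper's treatment: the corollary is simply the $c=1$ specialization of Theorem~\ref{R-130}, and in the paper the equation $-2-r^{2}+(2+r+3r^{2})\sqrt{1-r}=0$ arises directly as the numerator of $g_1'(r)$ in Case~1 of that theorem's proof. Your extra step of factoring $(1-r)^{2}$ out of \eqref{R-80} at $c=1$ explicitly justifies the paper's remark that ``$\alpha$ is the unique zero in $(0,1)$ of \eqref{R-80} for $c=1$,'' and the uniqueness is indeed inherited since the cancelled factor is strictly positive on $(0,1)$.
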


The next two theorems, we provide the sharp estimate of the pre-Schwarzian norm of the Alexander transformation for functions in the class $\mathcal{S}^*_{\lambda e}$ and $\mathcal{S}^*(q_c)$.

\begin{thm}\label{R-140}
For any $f\in\mathcal{S}^*_{\lambda e}$ and $g\in\mathcal{C}_{\lambda e}$, the pre-Schwarzian norm satisfies the following sharp inequality
$$
||P_g||=||P_{J[f]}||\le\frac{(1-\alpha^2)(e^{\lambda \alpha}-1)}{\alpha},
$$
where $\alpha$ is the unique root in $(0,1)$ of the equation 
\begin{align}\label{R-116}
\lambda re^{\lambda r}(1-r^2)-(1+r^2)(e^{\lambda r}-1)=0.
\end{align}
\end{thm}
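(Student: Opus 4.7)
The plan is to work with $g = J[f] \in \mathcal{C}_{\lambda e}$ directly using the equivalence stated earlier in the excerpt. By the definition of $\mathcal{C}_{\lambda e}$ in \eqref{R-12}, there exists a Schwarz function $\omega$ with $\omega(0) = 0$ and $|\omega(z)| < 1$ in $\mathbb{D}$ such that
$$
1 + \frac{zg''(z)}{g'(z)} = e^{\lambda \omega(z)}, \quad z \in \mathbb{D},
$$
so that the pre-Schwarzian derivative has the explicit form
$$
P_g(z) = \frac{g''(z)}{g'(z)} = \frac{e^{\lambda \omega(z)} - 1}{z}.
$$

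Next I would estimate $|P_g(z)|$ pointwise. Using the elementary inequality $|e^{w} - 1| \le e^{|w|} - 1$ (which follows immediately from the power series expansion of $e^w - 1$) together with the Schwarz lemma $|\omega(z)| \le |z|$, one obtains
$$
(1 - |z|^2) |P_g(z)| \le \frac{(1 - |z|^2)(e^{\lambda |\omega(z)|} - 1)}{|z|} \le \frac{(1 - r^2)(e^{\lambda r} - 1)}{r} =: h(r),
$$
where $r = |z|$ and $h$ is monotonically well-behaved since $(e^{\lambda r}-1)/r$ is increasing and $(1-r^2)$ is decreasing on $(0,1)$. A direct differentiation gives
$$
r^2 h'(r) = \lambda r (1 - r^2) e^{\lambda r} - (1 + r^2)(e^{\lambda r} - 1),
$$
so that critical points of $h$ on $(0,1)$ are precisely the positive roots of equation \eqref{R-116}. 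I would then verify that this expression is positive for small $r > 0$ (expanding $e^{\lambda r}$ in Taylor series shows the leading term is $\tfrac{1}{2}\lambda^2 r^2 > 0$) and negative at $r = 1$ (where it equals $-2(e^\lambda - 1) < 0$), giving existence of a root $\alpha \in (0,1)$ by the intermediate value theorem. Uniqueness, which is the main technical obstacle, would come from showing monotonicity of an appropriate auxiliary function: for instance, dividing the critical equation by $e^{\lambda r}$ to separate the polynomial and exponential pieces and then checking the sign of the derivative. Once $\alpha$ is the unique critical point, $h$ attains its supremum there, giving
$$
\|P_g\| \le h(\alpha) = \frac{(1-\alpha^2)(e^{\lambda\alpha} - 1)}{\alpha}.
$$

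Finally, sharpness is established via the extremal function from \eqref{R-15}. Taking $g_1 = J[f_1]$ where $f_1(z) = z \exp \int_0^z (e^{\lambda t} - 1)/t\, dt \in \mathcal{S}^*_{\lambda e}$, the identity $1 + zJ[f]''/J[f]' = zf'/f$ gives $1 + z g_1''(z)/g_1'(z) = e^{\lambda z}$, so $P_{g_1}(z) = (e^{\lambda z} - 1)/z$. On the positive real axis at $z = \alpha$ we obtain $(1 - \alpha^2)|P_{g_1}(\alpha)| = h(\alpha)$, confirming that the bound is attained and hence sharp. This simultaneously disposes of the statement $\|P_g\| = \|P_{J[f]}\|$ since by hypothesis $g$ and $J[f]$ range over the same class $\mathcal{C}_{\lambda e}$.
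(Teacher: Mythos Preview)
Your proposal is correct and follows essentially the same route as the paper: express $P_g$ via a Schwarz function, use $|e^w-1|\le e^{|w|}-1$ and the Schwarz lemma to reduce to the single-variable function $h(r)=(1-r^2)(e^{\lambda r}-1)/r$, locate its maximum, and exhibit the extremal $g_1=J[f_1]$. The only place you leave incomplete is the uniqueness of the critical point, which you flag as ``the main technical obstacle'' and propose to handle by an ad hoc monotonicity argument; the paper dispatches this cleanly by computing $h''(r)=-k(r)/r^3$ with $k(r)=2(1+e^{\lambda r})+\lambda r e^{\lambda r}(2+2r^2+\lambda r^3-\lambda r)>0$ for $0<r<1$ and $0<\lambda\le\pi/2$, so that $h'$ is strictly decreasing and the root is unique.
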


\begin{thm}\label{R-145}
For any $f\in\mathcal{S}^*(q_c)$ and $g\in\mathcal{C}(q_c)$, the pre-Schwarzian norm satisfies the following sharp inequality
$$
||P_g||=||P_{J[f]}||\le \frac{(1-\alpha^2)(1-\sqrt{1-c \alpha})}{\alpha},
$$
where $\alpha$ is the unique root in $(0,1)$ of the equation 
\begin{align}\label{R-119}
2(1+r^2)(1-\sqrt{1-cr})-c(r+3r^3)=0.
\end{align} 
\end{thm}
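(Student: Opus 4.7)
The plan rests on the elementary identity for the Alexander transform: since $J[f]'(z) = f(z)/z$, one computes
\[
z\,P_{J[f]}(z) = \frac{zf'(z)}{f(z)} - 1.
\]
Equivalently, with $g = J[f] \in \mathcal{C}(q_c)$, the definition gives $zg''(z)/g'(z) \prec \sqrt{1+cz}-1 =: h(z)$. The problem thus reduces to estimating $(1-|z|^2)|h(w(z))|/|z|$, where $w$ is the Schwarz function produced by the subordination $f \in \mathcal{S}^*(q_c)$.

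The first technical ingredient is a coefficient-sign bound for $h$. Expanding $h(z) = \sum_{n \ge 1}\binom{1/2}{n}c^n z^n$, I note that the generalized binomial coefficients $\binom{1/2}{n}$ alternate in sign (positive for $n=1$), so the triangle inequality yields, for $|\zeta| \le 1$,
\[
|h(\zeta)| \le \sum_{n \ge 1}\left|\binom{1/2}{n}\right| c^n |\zeta|^n = -h(-|\zeta|) = 1 - \sqrt{1 - c|\zeta|}.
\]
Combined with $|w(z)| \le |z|$ from the Schwarz lemma, this gives, for $|z| = r \in (0,1)$,
\[
(1-|z|^2)\bigl|P_{J[f]}(z)\bigr| \le \frac{(1-r^2)(1-\sqrt{1-cr})}{r} =: F(r).
\]

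It then remains to maximize $F$ on $(0,1)$. A direct differentiation produces
\[
r^2 F'(r) = -(1+r^2)\bigl(1-\sqrt{1-cr}\bigr) + \frac{cr(1-r^2)}{2\sqrt{1-cr}},
\]
and multiplying by $2\sqrt{1-cr}$ and simplifying turns the equation $F'(r) = 0$ into equation~\eqref{R-119}. To finish, one checks that the left-hand side of~\eqref{R-119} expands as $\frac{c^2}{4}r^2 + O(r^3)$ near $r=0$ (so it is positive for small $r$) and is non-positive at $r=1$ because $\sqrt{1-c} \ge 1-c$, giving existence of a root by the Intermediate Value Theorem. Uniqueness of $\alpha \in (0,1)$ then follows from a monotonicity analysis of $r^2 F'(r)$. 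Sharpness is immediate from the extremal function $g_2 := J[f_2]$, since then $zP_{g_2}(z) = \sqrt{1+cz}-1$ and evaluation at $z = -\alpha$ recovers $F(\alpha)$.

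The main obstacle I anticipate is establishing uniqueness of $\alpha$: although existence is an easy sign check, ruling out further critical points of $F$ requires isolating the radical and performing a careful sign argument on the resulting polynomial expression, ideally exhibiting strict monotonicity of the reduced form of \eqref{R-119} on $(0,1)$.
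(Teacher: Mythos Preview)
Your plan matches the paper's: reduce to maximizing $F(r)=(1-r^2)(1-\sqrt{1-cr})/r$ via the bound $|1-\sqrt{1+c\omega}|\le 1-\sqrt{1-c|\omega|}$ together with the Schwarz lemma, identify the critical-point equation with \eqref{R-119}, and exhibit sharpness through the Alexander transform of the extremal starlike function evaluated on the negative axis. Your Taylor-coefficient argument for the modulus bound is a nice explicit justification of a step the paper simply asserts.

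The one place to be careful is the uniqueness step you flag. Neither $r^2F'(r)$ nor the left-hand side of \eqref{R-119} is monotone on $(0,1)$: both vanish at $r=0$, rise (your own expansion gives $\tfrac{c^2}{4}r^2+O(r^3)$ for \eqref{R-119}), and then fall, and for $c=1$ the left side of \eqref{R-119} even returns to $0$ at $r=1$. The paper sidesteps this by computing $F''(r)$ directly and showing it is negative on $(0,1)$ (via an auxiliary function $k$ with $k(0)=0$ and $k'<0$), so that $F'$ itself is strictly decreasing with $\lim_{r\to 0^+}F'(r)=c^2/8>0$ and $F'(1)=-2(1-\sqrt{1-c})<0$; this yields existence and uniqueness in one stroke and also repairs the endpoint $c=1$, where your IVT check on \eqref{R-119} only produces $\le 0$ at $r=1$.
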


\section{Proof of main results}
Before we prove our main results, let us discuss two well known results that we will utilize throughout the article to derive our results as well as to construct our extremal functions. Let $\mathcal{B}$ be the class of all analytic functions $\omega:\mathbb{D}\rightarrow\mathbb{D}$ and $\mathcal{B}_0$ be the subclass of $\mathcal{B}$ with $\omega(0)=0$. Functions in $\mathcal{B}_0$ are called Schwarz function. According to Schwarz's lemma, if $\omega\in\mathcal{B}_0$, then $|\omega(z)|\le |z|$ and $|\omega'(0)|\le 1$. The equality occurs in any one of the inequalities if and only if $\omega(z)=e^{i\alpha}z$, $\alpha\in\mathbb{R}$. An extension of Schwarz lemma, known as Schwarz-Pick lemma gives the estimate $|\omega'(z)|\le (1-|\omega(z)|^2)/(1-|z|^2)$ for $z\in\mathbb{D}$ and $\omega\in\mathcal{B}$.

%
%
%

%


\begin{proof}[\textbf{Proof of Theorem \ref{R-125}}]
If $f\in\mathcal{S}^*_{\lambda e},~0<\lambda\le\pi/2$, then
$$
\frac{zf'(z)}{f(z)}\prec e^{\lambda z}.
$$
Thus, there exist a Schwarz function $\omega\in\mathcal{B}_0$ such that
\begin{align*}
\frac{zf'(z)}{f(z)}=e^{\lambda \omega(z)}.
\end{align*}
Taking logarithmic derivative on both sides and a simple calculation gives
\begin{align*}
P_f(z)=\frac{f''(z)}{f'(z)}=\lambda\omega'(z)+\frac{e^{\lambda \omega(z)}-1}{z},
\end{align*}
and so,
\begin{align*}
(1-|z|^2)|P_f(z)|&=(1-|z|^2)\left|\lambda\omega'(z)+\frac{e^{\lambda \omega(z)}-1}{z}\right|\\&\le (1-|z|^2)\left(\lambda|\omega'(z)|+\frac{e^{\lambda| \omega(z)|}-1}{|z|}\right).
\end{align*}
By Schwarz-Pick lemma, we obtain
\begin{align*}
(1-|z|^2)|P_f(z)|\le\lambda(1-|\omega(z)|^2)+\frac{(1-|z|^2)(e^{\lambda |\omega(z)|}-1)}{|z|}.
\end{align*}
For $0\le s:=|\omega(z)|\le|z|<1$, we have
$$(1-|z|^2)|P_f(z)|\le \lambda(1-s^2)+\frac{(1-|z|^2)(e^{\lambda s}-1)}{|z|}.$$
Therefore,
\begin{align}\label{R-50}
||P_f||=\sup\limits_{z\in\mathbb{D}}(1-|z|^2)|P_f(z)|\le\sup\limits_{0\le s\le|z|<1}g(|z|,s),
\end{align}
where $$g(r,s)=\lambda(1-s^2)+\frac{(1-r^2)(e^{\lambda s}-1)}{r}\quad \text{for}~r=|z|.$$
Now we wish to find the supremum of $g(r,s)$ on $\Omega=\{(r,s):0\le r\le s<1\}$.
Clearly,
$$
\frac{\partial g}{\partial r}=-\frac{(1+r^2)(e^{\lambda s}-1)}{r^2}<0.
$$
Therefore, $g(r,s)$ is a strictly decreasing function of $r$ in $[s,1)$ and hence,
\begin{align}\label{R-55}
g(r,s)\le g(s,s)=h(s),
\end{align}
where
\begin{align}\label{R-57}
h(s)=\frac{(1-s^2)(e^{\lambda s}+\lambda s-1)}{s}.
\end{align}
Now, we have to find supremum value of $h(s)$ in $[0,1)$. To do this, we first identify the critical values of $h(s)$ in $(0,1)$. A simple calculation gives
$$
h'(s)=\dfrac{1+s^2-2\lambda s^3-e^{\lambda s}(1-\lambda s+s^2+\lambda s^3)}{s^2}\quad\text{and}\quad h''(s)=\dfrac{k(s)}{s^3},
$$
where
$$
k(s)=-2(1+\lambda s^3)+e^{\lambda s}(2-2\lambda s+\lambda^2s^2-2\lambda s^3-\lambda^2s^4).
$$
Therefore,
$$
k'(s)=-\lambda s^2\left(6+e^{\lambda s}\left(6+6\lambda s-(1-s^2)\lambda^2\right)\right)<0,~\text{for}~s\in(0,1),~\lambda\in(0,\frac{\pi}{2}).
$$
Thus, $k$ is a strictly decreasing function in $(0,1)$. Since $k(0)=0$, this lead us to conclude that $k(s)<0$. Therefore, $h''(s)<0$ for all $s$ in $(0,1)$. Thus, the function $h'$ is a strictly decreasing function in $(0,1)$. In particular, $\lim\limits_{s\rightarrow 0}h'(s)=\lambda^2>0$ and $h'(1)=-2(e^\lambda-1)-2\lambda<0$. This lead us to conclude that the function $h'$ has unique zero, say $\alpha$, in $(0,1)$. Also, the function $h$ is increasing in a neighborhood of $0$ and decreasing in a neighborhood of $1$. This shows that $h$ attains its maximum at $s=\alpha$. Hence, from \eqref{R-50} and \eqref{R-55}, we get the desired result.\\

To show that the estimate is sharp, let us consider the function $f_1$, given by
$$
f_1(z)=z~\mathrm{exp}{\int_0^z\frac{e^{\lambda t}-1}{t}}dt.
$$
The pre-Schwarzian derivative of $f_1$ is given by
$$
P_{f_1}(z)=\frac{e^{\lambda z}+\lambda z-1}{z},
$$
and so,
$$
||P_{f_1}||=\sup\limits_{z\in\mathbb{D}}(1-|z|^2)|P_{f_1}(z)|=\sup\limits_{z\in\mathbb{D}}\frac{(1-|z|^2)|e^{\lambda z}+\lambda z-1|}{z}.
$$
On the positive real axis, we note that
$$
\sup\limits_{0\le r<1}\frac{(1-r^2)(e^{\lambda r}+\lambda r-1)}{r}=\sup\limits_{0\le r<1}h(r)=h(\alpha),
$$
where  $h$ is given by \eqref{R-57} and $\alpha$ is the unique root of \eqref{R-35} in $(0,1)$. Therefore,
$$
||P_{f_1}||=h(\alpha)=\frac{(1-\alpha^2)(e^{\lambda \alpha}+\lambda \alpha-1)}{\alpha}.$$

\end{proof}


Before we prove our next theorem, we prove the following technical lemma.

\begin{lem}\label{R-60}
For a fixed $c$ with $0<c\le 1$, let
\begin{align*}
k_1(s) &=(1-cs)^2 (-8+12cs-3c^2s^2-4cs^3+3c^2s^4),\\
k_2(s) &=-4\sqrt{1-cs}(-2+6cs-6c^2s^2+cs^3+c^3s^3),\\
k(s) &=\frac{k_1(s)+k_2(s)}{(1-cs)^{7/2}},
\end{align*} 
where $0<s<1$. Then $k(s)<0$ for all $s\in(0,1)$ and for each fixed $c\in(0,1]$.
\end{lem}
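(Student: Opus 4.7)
Since $(1-cs)^{7/2}>0$ on the range $(s,c)\in(0,1)\times(0,1]$, the claim $k(s)<0$ is equivalent to $N(s):=k_1(s)+k_2(s)<0$. The challenge is the $\sqrt{1-cs}$ appearing in $k_2$. The plan is to eliminate it by the change of variable $u=\sqrt{1-cs}$, giving $cs=1-u^2$ and $s=(1-u^2)/c$, so that $u$ ranges in a subset of $(0,1)$ as $(s,c)$ varies over $(0,1)\times(0,1]$. Using the identities $c^2s^2=(1-u^2)^2$, $cs^3=(1-u^2)^3/c^2$, $c^2s^4=(1-u^2)^4/c^2$, and $c^3s^3=(1-u^2)^3$, and then multiplying $N$ by $c^2$, one rewrites
\[c^2 N=c^2\,R_1(u)+R_2(u),\]
where $R_1,R_2$ are explicit polynomials in $u$ alone (independent of $c$).

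The core observation I would establish is the factorization
\[R_1(u)=u(1-u)^3A(u),\qquad R_2(u)=-u(1-u)^3B(u),\]
with $A,B$ polynomials having strictly positive coefficients and satisfying the identity $B(u)-A(u)=u^5\,C(u)$ for a further polynomial $C$ with positive coefficients. The triple factor $(1-u)^3$ is predicted by the fact that $N(0)=N'(0)=N''(0)=0$, which is straightforward to check by Taylor expanding $k_1$ and $k_2$ at $s=0$; knowing this, the factor $(1-u)^3$ may then be extracted by polynomial division, as can the common factor of $u$ in each of $R_1,R_2$.

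With these factorizations in place one has
\[c^2N=u(1-u)^3\bigl[c^2A(u)-B(u)\bigr]=-u(1-u)^3\bigl[(1-c^2)A(u)+u^5C(u)\bigr],\]
which is strictly negative on $u\in(0,1)$, $c\in(0,1]$: the prefactor $u(1-u)^3$ is positive, while the bracket is strictly positive since $(1-c^2)A(u)\ge0$ and $u^5C(u)>0$. This yields $N(s)<0$ on the required range and completes the proof. The principal obstacle is bookkeeping rather than any subtle analytic step: one must carefully carry out the substitution and verify the polynomial factorizations by expansion and synthetic division, a calculation that is entirely routine but lengthy.
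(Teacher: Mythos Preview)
Your substitution $u=\sqrt{1-cs}$ is correct and the claimed factorizations check out exactly: one finds
\[
A(u)=3u^4+5u^3+12u^2+12u+4,\qquad B(u)=(1+u)^3(3u^5+u^3+4),
\]
and indeed $B(u)-A(u)=u^5(3u^3+9u^2+10u+6)$, so the final identity
\[
c^2N=-u(1-u)^3\bigl[(1-c^2)A(u)+u^5C(u)\bigr]
\]
gives $N<0$ immediately. This is a genuinely different route from the paper's proof. The authors argue by calculus: they compute $k'(s)$, write it as $\dfrac{3cs^2(k_3(s)+k_4(s))}{2(1-cs)^{9/2}}$, and then establish $k_3+k_4<0$ by a cascade of monotonicity arguments (checking signs of $k_3'''$, $k_3''$, $k_3'$, and an auxiliary function $l_1(c)$). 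From $k'<0$ and $k(0)=0$ they conclude $k(s)<0$. Your approach trades this multi-layer derivative analysis for a single algebraic substitution and explicit polynomial factorization; once the division by $(1-u)^3$ is carried out, positivity is immediate because every relevant polynomial has positive coefficients. The paper's method is perhaps more in keeping with the style of the surrounding proofs, but yours is cleaner and more self-contained, and it explains \emph{why} $k(0)=0$ with the right multiplicity rather than treating that as a coincidence. One small wording point: the factor $(1-u)^3$ reflects a triple zero at $u=1$, which corresponds to $s=0$ (not $u=0$); your parenthetical about $N(0)=N'(0)=N''(0)=0$ is the right justification, but make the correspondence $s=0\leftrightarrow u=1$ explicit when you write it up.
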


\begin{proof}
Differentiating $k(s)$ with respect to $s$, one can obtain
\begin{equation}\label{R-65}
k'(s)=\frac{3cs^2\left(k_3(s)+k_4(s)\right)}{2(1-cs)^{\frac{9}{2}}},
\end{equation}
where
$$
k_3(s)=(1-cs)^2(-8+c^2+12cs-5c^2s^2)\quad\text{and}\quad k_4(s)=-8(1-c^2)\sqrt{1-cs}.
$$
A simple calculation gives
\begin{equation}\label{R-70}
k_3'(s)=2c(1-cs)(14-c^2-23cs+10c^2s^2)\quad\text{and}\quad
k_4'(s)=\frac{4c(1-c^2)}{\sqrt{1-cs}}> 0.
\end{equation}
Moreover,
$$k_3''(s)=2c^2(-37+c^2+66cs-30c^2s^2)\quad\text{and}\quad k_3'''(s)=12c^3(11-10cs)>0.$$
This shows that the function $k_3''$ is strictly increasing in $(0,1)$. Therefore, $$k_3''(s)<k_3''(1)=2c^2l(c),$$
where $l(c)=-37+66c-29c^2$ is a strictly increasing function in $(0,1]$ and $l(1)=0$. Thus, $k_3''(s)<0$ for all $s\in (0,1)$. This lead us to conclude that $k_3'$ is a strictly decreasing function in $(0,1)$ and therefore,
$$k_3'(s)>k_3'(1)=2c(1-c)^2(14-9c)> 0.$$
From \eqref{R-70}, we conclude that $k_3'(s)+k_4'(s)>0$ for $s\in (0,1)$ and so, $k_3+k_4$ is strictly increasing in $(0,1)$. Hence,
\begin{equation}\label{R-75}
k_3(s)+k_4(s)<k_3(1)+k_4(1)=l_1(c),
\end{equation}
where
$$l_1(c)=(1-c)^2(-8+12c-4c^2)-8(1-c^2)\sqrt{1-c}.$$
Since
$$l_1'(c)=4(1-c)^2(7-4c)+4(1+5c)\sqrt{1-c}> 0,$$
it follows that, $l_1$ is a strictly increasing function in $(0,1)$ and so, $l_1(c)<l_1(1)=0$. Therefore, from \eqref{R-75}, we have
$$k_3(s)+k_4(s)<k_3(1)+k_4(1)=l_1(c)<l_1(1)=0\quad\text{for all}~s\in (0,1).$$
Hence, from \eqref{R-65}, we conclude that the function $k$ is strictly decreasing in $(0,1)$ and therefore, $k(s)<k(0)=0$ for all $s\in (0,1)$ and $c\in(0,1]$.
\end{proof}


\begin{proof}[\textbf{Proof of Theorem \ref{R-130}}]
If $f\in\mathcal{S}^*(q_c),~0<c\le 1$, then
$$
\frac{zf'(z)}{f(z)}\prec \sqrt{1+cz}.
$$
Thus, there a exist Schwarz function $\omega(z)\in\mathcal{B}_0$ such that
\begin{align*}
\frac{zf'(z)}{f(z)}=\sqrt{1+c\omega(z)}.
\end{align*}
Taking logarithmic derivative on both sides and a simple calculation gives
\begin{equation}\label{R-85}
P_f(z)=\frac{f''(z)}{f'(z)}=\frac{c\omega'(z)}{2\big(1+c\omega(z)\big)}-\frac{1-\sqrt{1+c\omega(z)}}{z},
\end{equation}
and so,
\begin{align}\label{R-90}
(1-|z|^2)|P_f(z)|&\le(1-|z|^2)\left(\frac{|c\omega'(z)|}{2|1+c\omega(z)|}+\frac{|1-\sqrt{1+c\omega(z)}|}{|z|}\right)\\&\le(1-|z|^2)\left(\frac{c|\omega'(z)|}{2\left(1-c|\omega(z)|\right)}+\frac{\left(1-\sqrt{1-c|\omega(z)|}\right)}{|z|}\right)\nonumber.
\end{align}
By Schwarz-Pick lemma, we have
$$(1-|z|^2)|P_f(z)|\le \frac{c(1-|\omega(z)|^2)}{2(1-c|\omega(z)|)}+\frac{(1-|z|^2)(1-\sqrt{1-c|\omega(z)}|)}{|z|}.$$
For $0\le s:=|\omega(z)|\le|z|<1$, we obtain
$$(1-|z|^2)|P_f(z)|\le \frac{c(1-s^2)}{2(1-cs)}+\frac{(1-|z|^2)(1-\sqrt{1-cs})}{|z|}.$$
Therefore,
\begin{align}\label{R-95}
||P_f||=\sup\limits_{z\in\mathbb{D}}(1-|z|^2)|P_f(z)|\le \sup\limits_{0\le s\le|z|<1}g(|z|,s),
\end{align}
where $g$ is given by
\begin{align}\label{R-100}
g(r,s)=\frac{c(1-s^2)}{2(1-cs)}+\frac{(1-r^2)(1-\sqrt{1-cs})}{r}\quad \text{for}~r=|z|.
\end{align}
Now we wish to find the supremum of $g(r,s)$ on $\Omega=\{(r,s):0\le r\le s<1\}$. To do this we consider two different cases.\\

\textbf{Case-1}: Let $c=1$. Then
$$
g(r,s)=\frac{1+s}{2}+\frac{(1-r^2)(1-\sqrt{1-s})}{r}.
$$
Taking partial derivative with respect to $s$, we get
$$
\frac{\partial g}{\partial s}=\frac{1}{2}+\frac{1-r^2}{2r\sqrt{1-s}}>0.
$$
This shows that the function $g$ is a strictly increasing of $s$ in $[0,r]$ and therefore,
\begin{align}\label{R-102}
g(r,s)\le g(r,r)=g_1(r),
\end{align}
where
\begin{align}\label{R-104}
g_1(r)=\frac{(1+r)(2-r-2(1-r)^{3/2})}{2r}.
\end{align}
 Now, we have to find maximum value of $g_1$ in $(0,1)$. To determine the maximum value, we wish to find the critical value of $g_1$ in $(0,1)$. A simple calculation gives
$$
g_1'(r)=\frac{-2-r^2+(2+r+3r^2)\sqrt{1-r}}{2r^2}\quad\text{and}\quad g_1''(r)=\frac{g_2(r)}{4r^3\sqrt{1-r}},
$$
where
\begin{align*}
g_2(r)&=-8+4r+r^2-3r^3+8\sqrt{1-r}\\&
=8\left(\sqrt{1-r}-1+\frac{r}{2}+\frac{r^2}{8}\right)-3r^3\\&=-8\left(\frac{3r^3}{48}+\frac{15r^4}{384}+\cdots\right)-3r^3.
\end{align*}
From the Taylor series expansion it is clear that $g_1''(r)<0$ for all $r\in (0,1)$. Therefore, the function $g_1'$ is a strictly decreasing in $(0,1)$. In particular, $\lim_{r\rightarrow 0}g_1'(r)=5/8>0$ and $g_1'(1)=-3/2<0$.  This lead us to conclude that the function $g_1'$ has unique zero, say $\alpha$, in $(0,1)$. Note that $\alpha$ is the unique zero in $(0,1)$ of the equation \eqref{R-80} for $c=1$. Moreover, the function $g_1$ is increasing in a neighborhood of $0$ and decreasing in a neighborhood of $1$. Thus, the function $g_1$ has maximum at $\alpha$. Hence from \eqref{R-95} and \eqref{R-102}, we get the desired result.\\

\textbf{Case-2}: Let $c\neq1$. Then differentiating \eqref{R-100} with respect to $r$, we get
$$
\frac{\partial g}{\partial r}=-\frac{(1+r^2)(1-\sqrt{1-cs})}{r}<0.
$$
Therefore, $g(r,s)$ is a strictly decreasing function of $r$ in $[s,1)$. Thus,
\begin{align}\label{R-105}
g(r,s)\le g(s,s)=h(s),
\end{align}
where
\begin{align}\label{R-110}
h(s)=\frac{c(1-s^2)}{2(1-cs)}+\frac{(1-s^2)(1-\sqrt{1-cs})}{s}.
\end{align}
Now we wish to find maximum value of $h(s)$ for $0\le s<1$. To do this, we have to find critical points of $h(s)$ in $(0,1)$. A simple but tedius computation gives
$$
h'(s)=\frac{h_1(s)+h_2(s)}{2s^2(1-cs)^2}\quad\text{and}\quad h''(s)=\frac{k(s)}{4s^3},
$$
where
\begin{align}\label{R-115}
\begin{cases}
h_1(s)&=-2+4cs-(2+c^2)s^2+2cs^3-c^2s^4,\\
h_2(s)&=(1-cs)^{3/2}(2-cs+2s^2-3cs^3),
\end{cases}
\end{align}
and $k(s)$ is given in Lemma \ref{R-60}. By Lemma \ref{R-60}, $k(s)<0$ for $s\in(0,1)$ and so $h''(s)<0$ for $s\in (0,1)$. Hence, the function $h'$ is strictly decreasing in $(0,1)$ and
$$\lim_{s\rightarrow 0}h'(s)=\frac{5c^2}{8}>0\quad\text{and}\quad h'(1)=-2(1-\sqrt{1-c})-\frac{c}{1-c}<0.$$
This lead us to conclude that the function $h'$ has unique zero, say $\alpha$  in $(0,1)$. Note that $\alpha$ is the unique zero in $(0,1)$ of the equation \eqref{R-80} for $c\neq1$.  Moreover, the function $h$ is increasing in a neighborhood of $0$ and decreasing in a neighborhood of $1$. This shows that $h$ attains its maximum at $s=\alpha$. Hence from \eqref{R-95} and \eqref{R-105}, we get the desired result.\\

To show that the estimate is sharp, let us consider the function $f_2$ defined by
$$
f_2(z)=z~\mathrm{exp}{\int_0^z\frac{\sqrt{1-ct}-1}{t}dt}.
$$
A simple calculation gives
$$
P_{f_2}(z)=-\frac{c}{2(1-cz)}-\frac{1-\sqrt{1-cz}}{z}.
$$
and
$$
||P_{f_2}||=\sup\limits_{z\in\mathbb{D}}(1-|z|^2)|P_{f_2}(z)|=\sup\limits_{z\in\mathbb{D}}\left|\frac{c(1-|z|^2)}{2(1-cz)}+\frac{(1-|z|^2)(1-\sqrt{1-cz})}{z}\right|.
$$
On the positive real axis, we have
\begin{align*}
\sup\limits_{0\le r<1}\left(\frac{c(1-r^2)}{2(1-cr)}+\frac{(1-r^2)(1-\sqrt{1-cr})}{r}\right)=
\begin{cases}
\sup\limits_{0\le r<1}h(r)=h(\alpha),\quad
\text{for}\quad c\neq1,\\\
\sup\limits_{0\le r<1}g_1(r)=g_1(\alpha),\quad
\text{for}\quad c=1,
\end{cases}
\end{align*}
where $h$ and $g_1$ are given by \eqref{R-110} and \eqref{R-104}, respectively and $\alpha$ is the unique zero in $(0,1)$ of \eqref{R-80}. Thus,
$$
||P_{f_2}||=\frac{c(1-\alpha^2)}{2(1-c\alpha)}+\frac{(1-\alpha^2)(1-\sqrt{1-c\alpha})}{\alpha}.
$$
This completes the proof.
\end{proof}


\begin{proof}[\textbf{Proof of Theorem \ref{R-140}}]
Let $f\in\mathcal{S}^*_{\lambda e}$. Then $g=J[f]$ is in the class $\mathcal{C}_{\lambda e}$ and so 
$$
1+\frac{zg''(z)}{g'(z)}\prec e^{\lambda z}.
$$
Thus, there exist a function $\omega(z)\in\mathcal{B}_0$ such that
$$
1+\frac{zg''(z)}{g'(z)}= e^{\lambda \omega(z)}.
$$
By a simple calculation, we have
\begin{align*}
(1-|z|^2)|P_g(z)|&=(1-|z|^2)\left|\frac{g''(z)}{g'(z)}\right|\\&=(1-|z|^2)\left|\frac{e^{\lambda \omega(z)}-1}{z}\right|\\&\le(1-|z|^2)\left(\frac{e^{\lambda |\omega(z)|}-1}{|z|}\right)\\&\le(1-|z|^2)\left(\frac{e^{\lambda |z|}-1}{|z|}\right)
\end{align*}
and so,
\begin{align}\label{R-117}
||P_g||=\sup\limits_{z\in\mathbb{D}}(1-|z|^2)|P_g(z)|\le \sup\limits_{0\le |z|<1} h(|z|),
\end{align}
where $h$ is given by 
\begin{align}\label{R-118}
h(r)=\frac{(1-r^2)\left(e^{\lambda r}-1\right)}{r}\quad \mathrm{for}~r=|z|.
\end{align}
 A simple calculation gives
\begin{align*}
h'(r)=\frac{\lambda re^{\lambda r}(1-r^2)-(1+r^2)(e^{\lambda r}-1)}{r^2}\quad \mathrm{and}\quad
h''(r)=-\frac{k(r)}{r^3},
\end{align*}
where $$
k(r)=2(1+e^{\lambda r})+\lambda r e^{\lambda r}(2+2r^2+\lambda r^3-\lambda r)>0
$$ for $0<r<1$ and $0< \lambda\le\pi/2$. Thus we have $h''(r)<0$. Also, we have $h'(1)=-2(e^\lambda-1)<0$ and $h'(0)=\lambda^2/2>0$ which shows that $h'(r)$ has a unique zero say $\alpha$, in $(0,1)$. Thus $h$ attains its maximum value at $r=\alpha$. Therefore, from \eqref{R-118} we get
$$
||P_f||\le h(\alpha).
$$

To show that the estimate is sharp, let us consider the function $f_3$, given by
$$
f_3(z)=\int_0^z\left(\mathrm{exp}{\int_0^u\frac{e^{\lambda t}-1}{t}}dt\right)du.
$$
The pre-Schwarzian norm of $f_3$ is given by
$$
||P_{f_3}||=\sup\limits_{z\in\mathbb{D}}(1-|z|^2)|P_{f_3}(z)|=\sup\limits_{z\in\mathbb{D}}\frac{(1-|z|^2)|e^{\lambda z}-1|}{|z|}.
$$
On the positive real axis, we note that
$$
\sup\limits_{0\le r<1}\frac{(1-r^2)(e^{\lambda r}-1)}{r}=\sup\limits_{0\le r<1}h(r)=h(\alpha),
$$
where  $h$ is given by \eqref{R-118} and $\alpha$ is the unique root of \eqref{R-116} in $(0,1)$. Therefore,
$$
||P_{f_3}||=\frac{(1-\alpha^2)(e^{\lambda \alpha}-1)}{\alpha}.$$
\end{proof}

\begin{proof}[\textbf{Proof of Theorem \ref{R-145}}]
Let $f\in\mathcal{S}^*(q_c)$. Then $g=J[f]$ is in the class $\mathcal{C}(q_c)$ and so,
$$
1+\frac{zg''(z)}{g'(z)}\prec \sqrt{1+cz}.
$$
Thus, there exist a function $\omega(z)\in\mathcal{B}_0$ such that
$$
1+\frac{zg''(z)}{g'(z)}= \sqrt{1+c\omega(z)}.
$$
By a simple calculation we have
\begin{align*}
(1-|z|^2)|P_g|&=(1-|z|^2)\left|\frac{g''(z)}{g'(z)}\right|\\&=(1-|z|^2)\left|\frac{1-\sqrt{1+c\omega(z)}}{z}\right|\\&\le(1-|z|^2)\left(\frac{\left(1-\sqrt{1-c|\omega(z)|}\right)}{|z|}\right)\\&\le\frac{(1-|z|^2)\left(1-\sqrt{1-c|z|}\right)}{|z|}
\end{align*}
and so,
\begin{align}\label{R-120}
||P_g||=\sup\limits_{z\in\mathbb{D}}(1-|z|^2)|P_g(z)|\le \sup\limits_{0\le s\le|z|<1} h(|z|),
\end{align}
where $h$ is given by 
$$
h(r)=\frac{(1-r^2)\left(1-\sqrt{1-cr}\right)}{r}\quad \mathrm{for}~r=|z|.
$$
A simple calculation gives
$$
h'(r)=\frac{2(1+r^2)(1-\sqrt{1-cr})-c(r+3r^3)}{2r^2\sqrt{1-cr}}\quad \mathrm{and} \quad h''(r)=\frac{k(r)}{4r^3},
$$
where
$$
k(r)=\frac{-8+12cr-3c^2r^2-4cr^3+3c^2r^4+8(1-cr)^{\frac{3}{2}}}{(1-cr)^{\frac{3}{2}}}.
$$
Further
$$
k'(r)=\frac{3cr^2k_1(r)}{2(1-cr)^{\frac{5}{2}}},
$$
where
$k_1(r)=-8+c^2+12cr-5c^2r^2$ is a increasing function  for $0<r<1$. Thus we have $k_1(r)<k_1(1)=4(1-c)(c-2)<0$ and consequently
$k'(r)<0$. Thus $k(r)$ is a decreasing function in $r$ and so, $k(r)<k(0)=0.$ Therefore 
$h''(r)<0$ and so $h'$ is a decreasing function in $r$. Also, we have $h'(1)=-2(1-\sqrt{1-c})<0$ and $h'(0)=\frac{c^2}{8}>0$ which shows that $h'(r)$ has a unique zero, say $\alpha$ in $(0,1)$. Thus $h$ attains its maximum value at $r=\alpha$. Therefore, from \eqref{R-120} we get
$$
||P_f||\le h(\alpha).
$$

To show that the estimate is sharp, let us consider the function $f_4$ defined by
$$
f_4(z)=\int_0^z\left(\mathrm{exp}{\int_0^u\frac{\sqrt{1-ct}-1}{t}}dt\right)du.
$$
A simple calculation gives
$$
||P_{f_4}||=\sup\limits_{z\in\mathbb{D}}(1-|z|^2)|P_{f_4}(z)|=\sup\limits_{z\in\mathbb{D}}\left|\frac{(1-|z|^2)(1-\sqrt{1-cz})}{z}\right|.
$$
On the positive real axis, we have
\begin{align*}
\sup\limits_{0\le r<1}\left(\frac{(1-r^2)(1-\sqrt{1-cr})}{r}\right)=
\sup\limits_{0\le r<1}h(r)=h(\alpha),
\end{align*}
where $h$ is given by \eqref{R-120} and $\alpha$ is the unique zero in $(0,1)$ of \eqref{R-119}. Thus,
$$
||P_{f_4}||=\frac{(1-\alpha^2)(1-\sqrt{1-c\alpha})}{\alpha}.
$$

\end{proof}

\vspace{4mm}
\noindent\textbf{Data availability:}
Data sharing not applicable to this article as no data sets were generated or analyzed during the current study.\vspace{4mm}
\\
\noindent\textbf{Authors Contributions:}
All authors contributed equally to the investigation of the problem and the order of the authors is given alphabetically according to their surnames. All authors read and approved the final manuscript.\vspace{4mm}
\\

\end{document}